\newtheorem{theorem}{Theorem}
\newtheorem{lemma}{Lemma}
\newenvironment{proof}{
 \bgroup\noindent\small{\bf Proof\ }}{
 \nolinebreak\hbox{\ $\Box$}
 \egroup}
\newcommand{\EE}{\mathbb{E}}
\newcommand{\D}{{\mathrm{d}}}
\newcommand{\da}{\dot{a}}
\newcommand{\dda}{\ddot{a}}
\newcommand{\db}{\dot{b}}
\newcommand{\ddb}{\ddot{b}}
\newcommand{\dS}{\dot{S}}
\newcommand{\ddS}{\ddot{S}}
\newcommand{\dM}{\dot{M}}
\newcommand{\ha}{\widehat{a}}
\newcommand{\hda}{\widehat{\dot{a}}}
\newcommand{\hb}{\widehat{b}}
\newcommand{\hdb}{\widehat{\dot{b}}}
\newcommand{\ba}{{\bf a}}
\newcommand{\bb}{{\bf b}}
\newcommand{\bS}{{\bf S}}
\newcommand{\hS}{\widehat{S}}
\newcommand{\hdS}{\widehat{\dS}}
\newcommand{\us}{{\underline{s}}}
\newcommand{\ut}{{\underline{t}}}
\newcommand{\uu}{{\underline{u}}}
\newcommand{\fracs}[2]{{\textstyle \frac{#1}{#2}}}
\begin{document}

\title{Strong convergence of path sensitivities
  \thanks{This research was funded by the EPSRC ICONIC programme grant
          EP/P020720/1 and by the Hong Kong Innovation and Technology
          Commission (InnoHK Project CIMDA) and their support is gratefully
          acknowledged.}}

\author{M.B.~Giles}
         
\maketitle

\begin{abstract}
  It is well known that the Euler-Maruyama discretisation of
  an autonomous SDE using a uniform timestep $h$ has a strong
  convergence error which is $O(h^{1/2})$ when the drift and
  diffusion are both globally Lipschitz. This note proves that
  the same is true for the approximation of the path sensitivity
  to changes in a parameter affecting the drift and diffusion,
  assuming the appropriate number of derivatives exist and are
  bounded. This seems to fill a gap in the existing stochastic
  numerical analysis literature.
\end{abstract}

\section{Introduction}

Suppose we have an autonomous scalar SDE
\[
\D S_t = a(\theta,S_t)\, \D t + b(\theta,S_t)\, \D W_t,
\]
with given initial data $S_0$, in which the drift and diffusion
coefficients depend on a scalar parameter $\theta$ as well as
the path $S_t$.  The corresponding Euler-Maruyama discretisation,
using a fixed timestep $h$ is given by
\[
  \hS_{(n+1)h} = \hS_{nh} + a(\theta,\hS_{nh})\, h + b(\theta,\hS_{nh})\, \Delta W_n,
\]
where $\Delta W_t$ is a $N(0,h)$ random variable, and $\hS_0 = S_0$.

For a given $\theta$, if $a$ and $b$ are both globally
Lipschitz it is well known (see Theorem 10.6.3 in \cite{kp92},
and the subsequent discussion) that over a finite time interval
$[0,T]$, for any $p\geq 2$ there is a constant $c_p$ such that
\[
\EE\left[ \sup_{0<t<T} | \hS_t{-}S_t |^p \right] \leq c_p \, h^{p/2},
\]
where $\hS_t$ is the interpolation of the Euler-Maruyama
approximation defined by
\[
\D \hS_t = a(\theta,\hS_\ut)\, \D t + b(\theta,\hS_\ut)\, \D W_t,
\]
where $\ut$ represents $t$ rounded down to the nearest timestep.

If $a(\theta,S)$ is differentiable with respect to both arguments,
and we use the notation $a'\equiv \partial a/\partial S$
and $\da\equiv \partial a/\partial \theta$ then differentiating
the original SDE once w.r.t.~$\theta$ gives the linear pathwise
sensitivity SDE for $\dS_t \equiv \partial S_t/\partial \theta$,
\[
  \D \dS_t = (\da(\theta,S_t) + a'(\theta,S_t)\, \dS_t)\, \D t
  + (\db(\theta,S_t) + b'(\theta,S_t)\, \dS_t)\, \D W_t.
\]
It is easily seen that the Euler-Maruyama discretisation of this
SDE
\[
  \hdS_{(n+1)h} = \hdS_{nh} +
  \left(\da(\theta,\hS_{nh}) + a'(\theta,\hS_{nh})\, \hdS_{nh} \right) h
+ \left(\db(\theta,\hS_{nh}) + b'(\theta,\hS_{nh})\, \hdS_{nh} \right) \Delta W_n
\]
corresponds to the differentiation of the Euler-Maruyama
discretisation of the original SDE. This is used extensively in the
computational finance community as part of the pathwise sensitivity
approach (also known as IPA, Infinitesimal Perturbation Analysis)
to computing payoff sensitivities known collectively as ``the Greeks''
\cite{bg96,cg24,gg06,glasserman04,ecuyer90}.

The two SDEs can be combined to form a single vector SDE
\[
\D \bS_t = \ba(\theta, \bS_t)\, \D t + \bb(\theta, \bS_t)\, \D W_t.
\]
From this it seems natural that the path sensitivity approximation
should have the usual half order strong convergence, which is very
important for its use and analysis in the context of multilevel
Monte Carlo methods \cite{burgos14,bg12,giles15}.
However, there is a problem; except in very simple cases, $\ba$ and
$\bb$ do not satisfy the usual global Lipschitz condition since
\[
  b'(\theta,v_1)\, v_2 - b'(\theta,u_1)\, u_2
   = (b'(\theta,v_1) - b'(\theta,u_1))\, v_2 + b'(\theta,u_1)\, (v_2-u_2),
\]
so $b'(\theta,v_1)\, v_2$ is not uniformly Lipschitz when $u_2{=}v_2$
unless $b'(\theta,u_1) {=} b'(\theta,v_1)$ for all $\theta$, $u_1$, $v_1$.

In this note we prove that, despite this,  $O(h^{1/2})$ strong
convergence is achieved by $\hdS_t$, the Euler-Maruyama approximation
to $\dS_t$, and the same holds for higher derivatives, and for cases
in which $S_t$ and $\theta$ are vector quantities.

The proof comes from re-tracing the steps of the analysis in \cite{kp92}
which prove that for a finite time interval $[0,T]$ and any $p\geq 2$
there exist constants $c_p^{(1)}, c_p^{(2)}, c_p^{(3)}$, such that
\begin{eqnarray*}
  \EE\left[\sup_{0<t<T} |S_t|^p \right] &\leq & c_p^{(1)},   \\[0.1in]
  \EE\left[\, |S_t {-} S_{t_0}|^p \right] &\leq & c_p^{(2)}\, (t{-}t_0)^{p/2},
  \mbox{~~ for any } 0<t_0<t<T,  \\[0.1in]
  \EE\left[ \sup_{0<t<T} |\hS_t {-} S_t|^p \right] &\leq & c_p^{(3)}\, h^{p/2},
\end{eqnarray*}
proving that corresponding results hold for $\dS_t$ and $\hdS_t$,
primarily because of the boundedness of $a'$ and $b'$ which
multiply $\dS_t$ in the drift and diffusion coefficients.

\section{SDE sensitivity analysis}

For the first order sensitivity analysis we assume that the first
derivatives $a'$, $\da$, $b'$, $\db$ and the second derivatives
$a''$, $\da'$, $\dda$, $b''$, $\db'$, $\ddb$ all exist and are
uniformly bounded so that there exist constants
$L_a, L_b$ such that
\begin{eqnarray*}
\sup_{\theta, S} \max\left\{ |a'(\theta,S)|,\ |\da(\theta,S)|,\ 
|a''(\theta,S)|,\ |\da'(\theta,S)|,\ |\dda(\theta,S)| \right\} &\leq& L_a,\\
\sup_{\theta, S} \max\left\{ |b'(\theta,S)|,\ |\db(\theta,S)|,\ 
|b''(\theta,S)|,\ |\db'(\theta,S)|,\ |\ddb(\theta,S)| \right\} &\leq& L_b.
\end{eqnarray*}

The pathwise sensitivity SDE is
\[
\D \dS_t = (\da_t + a_t' \dS_t)\, \D t + (\db_t + b_t' \dS_t)\, \D W_t,
\]
subject to initial data $\dS_0$ which may be non-zero if $S_0$ also
depends on $\theta$.  Here we use the notation $\da_t$ to represent
$\da(\theta,S_t)$, with a similar interpretation for $a_t'$, $\db_t$
and $b_t'$.
\begin{lemma}
For a given time interval $[0,T]$, and any $p\geq 2$,
there exists a constant $c_p^{(1)}$ such that
\[
\sup_{0<t<T} \EE\left[\, |\dS_t|^p \right] \leq  c_p^{(1)}.
\]
\end{lemma}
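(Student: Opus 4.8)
The plan is to exploit the linear structure of the sensitivity SDE together with the uniform boundedness of its coefficients. Writing the SDE in integral form,
\[
\dS_t = \dS_0 + \int_0^t (\da_s + a_s' \dS_s)\, \D s + \int_0^t (\db_s + b_s' \dS_s)\, \D W_s,
\]
I would apply the elementary inequality $|x+y+z|^p \leq 3^{p-1}(|x|^p+|y|^p+|z|^p)$ and take expectations to separate the contributions of the initial data, the drift integral and the diffusion integral.

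For the drift integral I would use Jensen's (or H\"older's) inequality to pull the $p$-th power inside,
\[
\EE\left[\left|\int_0^t (\da_s + a_s' \dS_s)\, \D s\right|^p\right] \leq t^{p-1} \int_0^t \EE\left[|\da_s + a_s' \dS_s|^p\right] \D s,
\]
and for the diffusion integral I would invoke the Burkholder--Davis--Gundy inequality (or the standard $L^p$ moment bound for stochastic integrals valid for $p \geq 2$) to obtain
\[
\EE\left[\left|\int_0^t (\db_s + b_s' \dS_s)\, \D W_s\right|^p\right] \leq C_p\, t^{p/2-1} \int_0^t \EE\left[|\db_s + b_s' \dS_s|^p\right] \D s.
\]
The crucial step is then to use the uniform bounds $|\da_s|, |a_s'| \leq L_a$ and $|\db_s|, |b_s'| \leq L_b$, which give $|\da_s + a_s' \dS_s|^p \leq 2^{p-1} L_a^p (1 + |\dS_s|^p)$ and similarly for the diffusion coefficient. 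This is precisely where the difficulty flagged in the introduction is circumvented: although $b_t' \dS_t$ is not globally Lipschitz as a function of the pair $(S_t, \dS_t)$, the factor $b_t'$ multiplying the unknown $\dS_t$ is bounded, so the coefficient grows at most linearly in $\dS_t$ with a bounded constant.

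Combining these estimates yields an inequality of the form
\[
\EE[|\dS_t|^p] \leq C_1 + C_2 \int_0^t \EE[|\dS_s|^p]\, \D s,
\]
with $C_1, C_2$ depending only on $p$, $T$, $L_a$, $L_b$ and $|\dS_0|^p$, whereupon Gronwall's inequality gives $\EE[|\dS_t|^p] \leq C_1 e^{C_2 T}$ uniformly in $t \in [0,T]$, which is the claimed bound. The one point requiring care --- and the main technical obstacle --- is that Gronwall cannot be applied until one knows a priori that $\EE[|\dS_t|^p]$ is finite, since otherwise the integral inequality is vacuous. I would handle this in the usual way by introducing the stopping times $\tau_N = \inf\{t : |\dS_t| \geq N\}$, running the entire argument for the stopped process $\dS_{t \wedge \tau_N}$ (whose moments are trivially bounded by $N^p$, so that every expectation above is finite and Gronwall applies with constants independent of $N$), and then letting $N \to \infty$ and appealing to Fatou's lemma to recover the bound for $\dS_t$ itself.
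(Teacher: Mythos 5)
Your argument is correct, but it is not the route the paper takes for this lemma. The paper applies It\^o's lemma directly to $P_t = \dS_t^p$ for even integers $p$, bounds the drift of $P_t$ using $|\dS_t|^q < 1 + \dS_t^p$ and $(\db_t + b_t'\dS_t)^2 \leq 2\db_t^2 + 2(b_t'\dS_t)^2$, and obtains the differential inequality $\D\,\EE[P_t] \leq (pL_a + p(p{-}1)L_b^2)(1+2\,\EE[P_t])\,\D t$, with general $p$ recovered afterwards by H\"older; no stochastic-integral moment inequality is needed. Your decomposition of the integral form via Jensen and Burkholder--Davis--Gundy, followed by Gr\"onwall, is instead essentially the technique the paper reserves for Theorem~\ref{thm:moments}: indeed, if you moved the supremum inside the expectation before applying BDG you would obtain the strictly stronger bound $\EE[\sup_{0<t<T}|\dS_t|^p] \leq c_p^{(1)}$ directly, making the lemma redundant. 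What each approach buys: the It\^o route is more elementary (no BDG constant) but is tied to even integer powers and yields only the pointwise-in-$t$ moment bound; your route handles all $p\geq 2$ uniformly and upgrades to the running-supremum bound at no extra cost. You also deserve credit for flagging the a priori finiteness needed before Gr\"onwall can be invoked and resolving it by localization with $\tau_N$ and Fatou --- the paper's proof passes over this point silently (its local martingale term must likewise be killed by stopping before taking expectations), so your version is in fact the more careful one.
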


\begin{proof}
For even integer $p\geq 2$, if we define $P_t = \dS_t^p$ then Ito's lemma
gives us
\[
  \D P_t = \left( p\, \dS_t^{p-1} (\da_t + a_t' \dS_t)
    + \fracs{1}{2} \, p\,(p{-}1)\, \dS_t^{p-2}(\db_t + b_t' \dS_t)^2 \right) \D t
  + p\, \dS_t^{p-1} (\db_t + b_t' \dS_t)\, \D W_t.
\]
Using the fact that $|\dS_t|^q < 1 + \dS_t^p$ for $0<q<p$, and
$(\db_t + b_t' \dS_t)^2 \leq 2\, \db_t^2 + 2\,(b_t' \dS_t)^2$ we obtain
\[
\D \EE[P_t] \leq (p\,L_a + p\,(p{-}1)\,L_b^2)\, (1 + 2\, \EE[P_t])\, \D t
\]
and hence $\EE[P_t]$ is finite over $[0,T]$ by Gr\"onwall's inequality.

Bounds for other values of $p$ can be obtained using H\"older's inequality.
\end{proof}

The previous result is strengthened in the following lemma.

\begin{theorem}
\label{thm:moments}
For a given time interval $[0,T]$, and any $p\geq 2$,
there exists a constant $c^{(1)}_p$ such that
\[
\EE\left[ \sup_{0<t<T} | \dS_t |^p \right] \leq c^{(1)}_p.
\]
\end{theorem}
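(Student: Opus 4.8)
The plan is to start from the integral form of the sensitivity SDE,
\[
\dS_t = \dS_0 + \int_0^t (\da_s + a_s'\, \dS_s)\, \D s + \int_0^t (\db_s + b_s'\, \dS_s)\, \D W_s,
\]
and to bound $\EE[\sup_{0<t<T}|\dS_t|^p]$ by splitting the right-hand side into its three contributions. Using the elementary inequality $|x+y+z|^p \leq 3^{p-1}(|x|^p+|y|^p+|z|^p)$ and taking the supremum over $t$ before taking expectations, it suffices to control the initial term $|\dS_0|^p$ (a constant), the supremum of the $p$-th power of the drift integral, and the supremum of the $p$-th power of the stochastic integral.

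For the drift integral I would apply H\"older's inequality in time to obtain $\sup_{0<t<T}|\int_0^t(\da_s+a_s'\,\dS_s)\,\D s|^p \leq T^{p-1}\int_0^T |\da_s+a_s'\,\dS_s|^p\,\D s$, and then use $|\da_s+a_s'\,\dS_s|^p \leq 2^{p-1}L_a^p(1+|\dS_s|^p)$, which follows from the uniform bounds on $\da$ and $a'$. The genuinely new ingredient, and the main obstacle compared with Lemma~1, is the supremum of the stochastic integral: the pointwise It\^o argument used for Lemma~1 no longer suffices here. The natural tool is the Burkholder--Davis--Gundy inequality, which bounds $\EE[\sup_{0<t<T}|\int_0^t(\db_s+b_s'\,\dS_s)\,\D W_s|^p]$ by a constant times $\EE[(\int_0^T(\db_s+b_s'\,\dS_s)^2\,\D s)^{p/2}]$; a further application of H\"older in time (valid since $p/2\geq 1$) together with the uniform bounds on $\db$ and $b'$ reduces this to $c\,T^{p/2-1}\int_0^T(1+\EE[|\dS_s|^p])\,\D s$.

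Finally I would invoke Lemma~1, which supplies the uniform-in-time moment bound $\EE[|\dS_s|^p]\leq c_p^{(1)}$. Because each of the three time-integrals above involves only $\EE[|\dS_s|^p]$, rather than $\EE[\sup_{0<u<s}|\dS_u|^p]$, they are all immediately bounded by constants depending on $T$, $p$, $L_a$, $L_b$ and $c_p^{(1)}$, so the estimate closes directly and no Gr\"onwall-type argument is required for this step---Lemma~1 has already done that work. For general real $p\geq 2$ there is no parity restriction, since the Burkholder--Davis--Gundy and H\"older inequalities hold for all such $p$.
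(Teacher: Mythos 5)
Your proposal is correct, and it uses the same core tools as the paper (the integral form of the sensitivity SDE, H\"older/Jensen in time for the drift integral, and the Burkholder--Davis--Gundy inequality for the stochastic integral), but it closes the estimate differently. The paper does not invoke Lemma~1 at all in this proof: it splits the right-hand side into five terms, bounds the two terms containing $\dS_u$ by $\int_0^t \dM_u^{(p)}\,\D u$ where $\dM_t^{(p)} = \EE[\sup_{0<s<t}|\dS_s|^p]$ is the quantity being estimated, and then applies Gr\"onwall's inequality to $\dM_t^{(p)}$. You instead substitute the already-proved pointwise moment bound $\EE[|\dS_s|^p]\leq c_p^{(1)}$ from Lemma~1 into the time integrals, so the estimate closes immediately with no second Gr\"onwall step. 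Your route is slightly cleaner: applying Gr\"onwall to $\dM_t^{(p)}$, as the paper does, implicitly requires knowing a priori that this supremum-moment is finite (or a localization argument), whereas your argument only needs the finiteness already established in Lemma~1. The trade-off is that your proof genuinely depends on Lemma~1, while the paper's version is self-contained modulo the standard Gr\"onwall technicality; both are valid, and your observation that no parity restriction on $p$ arises here (unlike in the It\^o-based proof of Lemma~1) is accurate.
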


\begin{proof}
Starting from
\[
  \dS_t = \dS_0 + \int_0^t (\da_s + a_s' \dS_s)\, \D s
                + \int_0^t (\db_s + b_s' \dS_s)\, \D W_s,
\]
and defining
\[
\dM^{(p)}_t = \EE\left[ \sup_{0<s<t} | \dS_s |^p \right],
\]
Jensen's inequality gives
\begin{eqnarray*}
\dM_t^{(p)} &\leq& 5^{p-1} \left(\ |\dS_0|^p + 
   \EE\left[ \sup_{0<s<t} \left| \int_0^s  \da_u \, \D u \right|^p  \right]
               + \EE\left[ \sup_{0<s<t}  \left| \int_0^s  a_u' \dS_u \, \D u \right|^p  \right]
               \right. \\ && \left. ~~~~~
 +\, \EE\left[ \sup_{0<s<t} \left| \int_0^s  \db_u \, \D W_u \right|^p  \right]
 + \EE\left[ \sup_{0<s<t}  \left| \int_0^s   b_u' \dS_u \, \D W_u \right|^p \right] \ \right).
\end{eqnarray*}
Jensen's inequality for integrals gives
\[
  \left| \int_0^s  \da_u \, \D u \right|^p
 \ \leq\ s^{p-1} \int_0^s  |\da_u|^p\, \D u
 \ \leq\ t^{p-1} \int_0^t  |\da_u|^p\, \D u,
\]
for $0\leq s\leq t \leq T$, and hence
\[
   \EE\left[ \sup_{0<s<t}  \left| \int_0^s  \da_u \, \D u \right|^p \right]
  \ \leq\ t^{p-1} \int_0^t  \EE\left[\,  |\da_u|^p \right] \, \D u
  \ \leq\ L_a^p\ t^p.
\]
Similarly,
\[
  \EE\left[ \sup_{0<s<t}  \left| \int_0^s  a_u' \dS_u \, \D u \right|^p \right]
  \ \leq\ t^{p-1} \int_0^t  L_a^p\ \EE\left[ |\dS_u|^p\right] \, \D u
  \ \leq\ L_a^p\ t^{p-1} \int_0^t  \dM_u^{(p)} \, \D u.
\]
The BDG (Burkholder-Davis-Gundy) inequality \cite{bdg72} gives
\[
  \EE\left[ \sup_{0<s<t} \left| \int_0^s  \db_u \, \D W_u \right|^p  \right]
 \, \leq\, C_p\  \EE\left[ \left( \int_0^t  |\db_u|^2  \, \D u \right)^{p/2}\right]
 \ \leq\ C_p\, L_b^p\ t^{p/2}
\]
where $C_p$ is a constant arising from the BDG inequality, 
and similarly
\begin{eqnarray*}
  \EE\left[ \sup_{0<s<t} \left| \int_0^s  b_u' \dS_u \, \D W_u \right|^p  \right]
   &\leq& C_p \  \EE\left[ \left( \int_0^t  L_b^2\, | \dS_u |^2 \, \D u \right)^{p/2} \right]
\\ &\leq& C_p\, L_b^p \ t^{p/2-1} \int_0^t  \EE\left[ | \dS_u |^p\right]  \, \D u
\\ &\leq& C_p\, L_b^p \ t^{p/2-1} \int_0^t  \dM_u^{(p)} \, \D u.
\end{eqnarray*}
Combining these bounds, and noting that $t\leq T$, we obtain constants $c_1, c_2$ for which
\[
   \dM_t^{(p)} \leq c_1 + c_2 \int_0^t  \dM_u^{(p)}\, \D u
\]
and the desired bound for $\dM_t^{(p)}$ follows from Gr\"onwall's inequality.
\end{proof}

\begin{lemma}
\label{lemma:short_time}
For a given time interval $[0,T]$, and any $p\geq 2$,
there exists a constant $c_p^{(2)}$ such that
\[
\EE\left[\, |\dS_t-\dS_{t_0}|^p \right] \leq  c_p^{(2)} (t{-}t_0)^{p/2}
\]
for any $0\leq t_0 \leq t\leq T$.
\end{lemma}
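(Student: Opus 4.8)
The plan is to follow the same template as the proof of Theorem~\ref{thm:moments}, but now working over the subinterval $[t_0,t]$ and keeping careful track of the powers of $(t-t_0)$ that each term produces. Starting from the integral form
\[
\dS_t - \dS_{t_0} = \int_{t_0}^t (\da_s + a_s' \dS_s)\, \D s + \int_{t_0}^t (\db_s + b_s' \dS_s)\, \D W_s,
\]
I would apply Jensen's inequality (equivalently the $c_r$-inequality) to bound $\EE[\,|\dS_t - \dS_{t_0}|^p\,]$ by a constant multiple of the sum of the $p$-th moments of the drift integral and the diffusion integral, so that the two contributions can be estimated separately.

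For the drift integral I would use Jensen's inequality for integrals to obtain
\[
\left|\int_{t_0}^t (\da_s + a_s'\dS_s)\, \D s\right|^p \leq (t-t_0)^{p-1}\int_{t_0}^t |\da_s + a_s'\dS_s|^p\, \D s,
\]
then take expectations and use the boundedness of $\da$ and $a'$ together with the uniform moment bound $\EE[\,|\dS_s|^p\,]\leq c_p^{(1)}$ supplied by Theorem~\ref{thm:moments}. This yields a bound of the form $C\,(t-t_0)^p$. Since $t-t_0\leq T$, this is dominated by $C\,T^{p/2}(t-t_0)^{p/2}$, so the drift contributes at the required order.

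For the diffusion integral I would invoke the BDG inequality to pass to $\EE\big[(\int_{t_0}^t |\db_s + b_s'\dS_s|^2\, \D s)^{p/2}\big]$, and then apply Jensen for integrals at exponent $p/2$ to extract a factor $(t-t_0)^{p/2-1}$ and pull the power inside the integral, exactly as in the proof of Theorem~\ref{thm:moments}. Bounding the integrand via the boundedness of $\db$ and $b'$ and the same uniform moment bound again produces a contribution of order $(t-t_0)^{p/2}$. Combining the two estimates gives the claimed bound, with $c_p^{(2)}$ depending on $p$, $T$, $L_a$, $L_b$ and $c_p^{(1)}$.

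There is no serious obstacle here; the only point requiring mild care is the bookkeeping of exponents, specifically recognising that the diffusion term already delivers the sharp $(t-t_0)^{p/2}$ scaling while the drift term produces the higher power $(t-t_0)^p$, which must be demoted to $(t-t_0)^{p/2}$ using $t-t_0\leq T$. The uniform-in-$s$ moment bound from Theorem~\ref{thm:moments} is precisely what renders the integrands uniformly bounded, so that the inner $s$-integrals contribute plain powers of $(t-t_0)$ rather than requiring any further Gr\"onwall argument.
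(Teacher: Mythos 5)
Your argument is correct, and it reaches the bound by a genuinely shorter route than the paper. The paper's proof rewrites the integrands as $\da_s + a_s'\,\dS_{t_0} + a_s'(\dS_s - \dS_{t_0})$ (and likewise for the diffusion), defines $\dM^{(p)}_t = \EE\bigl[\sup_{t_0<s<t}|\dS_s - \dS_{t_0}|^p\bigr]$, and closes the estimate with Gr\"onwall's inequality, exactly mirroring the template of Theorem~\ref{thm:moments}: the $\dS_{t_0}$ terms supply the $c_1\,(t-t_0)^{p/2}$ forcing term and the increment terms feed back into the integral inequality. You instead leave $\dS_s$ intact and absorb it via the uniform moment bound $\sup_{0<s<T}\EE[\,|\dS_s|^p\,]\leq c_p^{(1)}$ already established, so the integrands are uniformly bounded in $L^p$ and no Gr\"onwall step is needed; your exponent bookkeeping (the drift giving $(t-t_0)^p$, demoted to $T^{p/2}(t-t_0)^{p/2}$, the diffusion giving the sharp $(t-t_0)^{p/2}$ via BDG and Jensen at exponent $p/2$) is exactly right. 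Both proofs are valid; yours is the more economical given that the moment bound is already in hand, while the paper's version is stated for the supremum $\EE\bigl[\sup_{t_0<s<t}|\dS_s - \dS_{t_0}|^p\bigr]$, the form quoted in the higher-order induction of the extensions section. That said, your estimates control the supremum too at no extra cost, since BDG already bounds the running maximum of the stochastic integral and your Jensen bound for the drift is monotone in the upper limit of integration, so nothing essential is lost.
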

\begin{proof}
The proof is almost identical to the previous proof, but starting from
\[
  \dS_t - \dS_{t_0} = \int_{t_0}^t \left(\da_s + a_s' \dS_{t_0} + a_s' (\dS_s {-} \dS_{t_0})\right)\, \D s
                   + \int_{t_0}^t \left(\db_s + b_s' \dS_{t_0} + b_s' (\dS_s {-} \dS_{t_0})\right)\, \D W_s,
\]
and defining 
\[
\dM^{(p)}_t = \EE\left[ \sup_{t_0<s<t} | \dS_s - \dS_{t_0} |^p \right],
\]
leading to there being constants $c_1, c_2$ such that
\[
 \dM_t^{(p)} \leq c_1 (t{-}t_0)^{p/2} + c_2 \int_{t_0}^t  \dM_u^{(p)}\, \D u.
\]
The result then follows again from Gr\"onwall's inequality.
\end{proof}

\section{Strong convergence analysis}

The integral form of the SDE for the first order sensitivity is
\[
  \dS_t = \dS_0 + \int_0^t (\da_s + a_s' \dS_s)\, \D s
  + \int_0^t (\db_s + b_s' \dS_s)\, \D W_s,
\]
and the corresponding continuous Euler-Maruyama discretisation
can be defined as
\[
  \hdS_t = \hdS_0 + \int_0^t (\hda_\us + \ha_\us' \hdS_\us)\, \D s
  + \int_0^t (\hdb_\us + \hb_\us' \hdS_\us)\, \D W_s,
\]
where the notation $\us$ denotes $s$ rounded downwards to the
nearest timestep, and $\hda_\us$ denotes $\da(\theta, \hS_\us)$
with similar meanings for $\ha'_\us$, $\hdb_\us$ and  $\hb'_\us$.

\begin{lemma}
  \label{lemma:moments}
For a given time interval $[0,T]$, and any $p\geq 2$,
there exists a constant $c^{(1)}_p$ such that
\[
\EE\left[ \sup_{0<t<T} | \hdS_t |^p \right] \leq c^{(1)}_p.
\]
\end{lemma}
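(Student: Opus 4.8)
The plan is to re-run the argument of Theorem~\ref{thm:moments} almost verbatim, the only genuinely new feature being that in the discrete scheme the coefficients are frozen at the value $\hS_\us$ attained at the start of each timestep. Writing $\widehat{M}^{(p)}_t = \EE\left[\sup_{0<s<t} |\hdS_s|^p\right]$ and starting from the integral form of the continuous Euler--Maruyama interpolant, Jensen's inequality splits $\widehat{M}^{(p)}_t$ into the same five contributions as before: one from $\hdS_0$, two from the drift integrals $\int_0^s \hda_\uu\,\D u$ and $\int_0^s \ha_\uu' \hdS_\uu\,\D u$, and two from the stochastic integrals $\int_0^s \hdb_\uu\,\D W_u$ and $\int_0^s \hb_\uu' \hdS_\uu\,\D W_u$, incurring the usual factor $5^{p-1}$.

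Each term would then be bounded exactly as in Theorem~\ref{thm:moments}. The frozen coefficients $\hda_\uu,\ha_\uu',\hdb_\uu,\hb_\uu'$ are still evaluations of $\da,a',\db,b'$, hence remain uniformly bounded by $L_a$ or $L_b$ regardless of the freezing; this is precisely the point flagged in the introduction that rescues the argument despite the failure of the global Lipschitz condition for the combined vector system. Jensen's inequality for integrals controls the two deterministic integrals and the BDG inequality~\cite{bdg72} controls the two stochastic integrals, reducing everything to a bound on $\EE[|\hdS_\uu|^p]$.

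The one observation needed to close the loop is that freezing costs nothing: since $\uu\leq u$ we have $|\hdS_\uu|\leq \sup_{0<r<u}|\hdS_r|$, so $\EE[|\hdS_\uu|^p]\leq \widehat{M}^{(p)}_u$. Substituting this wherever $\EE[|\hdS_\uu|^p]$ appears, and using $t\leq T$, would produce constants $c_1,c_2$ for which
\[
  \widehat{M}^{(p)}_t \leq c_1 + c_2\int_0^t \widehat{M}^{(p)}_u\,\D u,
\]
and the claimed bound then follows from Gr\"onwall's inequality, first for even integer $p$ and then for general $p\geq 2$ by H\"older's inequality.

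I expect the main (if minor) obstacle to be justifying that Gr\"onwall may legitimately be applied, namely that $\widehat{M}^{(p)}_t$ is finite a priori rather than merely formally bounded by the above. Unlike the continuous case, this finiteness cannot simply be inherited from a pointwise-moment lemma, so I would argue it directly from the structure of the scheme: on each interval $[nh,(n{+}1)h)$ the interpolant $\hdS_t$ is an affine function of the Brownian increment with $\mathcal{F}_{nh}$-measurable, uniformly bounded multiplicative coefficients, whence by induction on $n$ all the moments, and the moment of the running maximum over the finitely many steps up to time $T$, are finite. With this a priori finiteness established, the Gr\"onwall step is rigorous and the proof is complete.
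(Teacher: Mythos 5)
Your proposal is correct and takes essentially the same route as the paper, whose entire proof of this lemma is the one-line remark that it ``follows the same approach used with Theorem~\ref{thm:moments}'' --- precisely the argument you spell out, with the frozen coefficients still bounded by $L_a$, $L_b$ and $\EE[\,|\hdS_\uu|^p]$ absorbed into the running supremum. Your extra step establishing the a priori finiteness of $\EE\left[\sup_{0<s<t}|\hdS_s|^p\right]$ by induction over the timesteps, so that Gr\"onwall applies rigorously, is a detail the paper leaves implicit and is handled correctly.
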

\begin{proof}
The proof follows the same approach used with Theorem \ref{thm:moments}.
\end{proof}

We now come to the strong convergence theorem.

\begin{theorem}
Given the assumption about the boundedness of all first and second
derivatives, for a given time interval $[0,T]$, and any $p\geq 2$,
there exists a constant $c^{(3)}_p$ such that
\[
\EE\left[ \sup_{0<t<T} | \hdS_t - \dS_t |^p \right] \leq c^{(3)}_p h^{p/2}.
\]
\end{theorem}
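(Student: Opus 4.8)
The plan is to apply the same Gr\"onwall-based argument used in Theorem~\ref{thm:moments} and Lemma~\ref{lemma:short_time}, now to the error process $E_t = \hdS_t - \dS_t$. Subtracting the integral form of the sensitivity SDE from that of its continuous Euler--Maruyama discretisation, and using $\hdS_0 = \dS_0$, I would write
\[
  E_t = \int_0^t \left[ (\hda_\us + \ha_\us' \hdS_\us) - (\da_s + a_s' \dS_s)\right] \D s
      + \int_0^t \left[ (\hdb_\us + \hb_\us' \hdS_\us) - (\db_s + b_s' \dS_s)\right] \D W_s,
\]
and split each integrand into a \emph{feedback} part, proportional to $E_\us$ with a bounded coefficient, and a \emph{forcing} part not involving $E$. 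For the drift this rests on the identity
\[
  \hda_\us + \ha_\us' \hdS_\us - \da_s - a_s' \dS_s
   = \ha_\us'\, E_\us + (\hda_\us - \da_s) + \ha_\us'(\dS_\us - \dS_s) + (\ha_\us' - a_s')\, \dS_s,
\]
with the analogous identity for the diffusion. Since $|\ha_\us'|\leq L_a$ and $|\hb_\us'|\leq L_b$, the feedback parts contribute, after Jensen (drift) and BDG (diffusion), exactly a term $c_2\int_0^t \dM^{(p)}_u\,\D u$ with $\dM^{(p)}_t = \EE[\sup_{0<s<t}|E_s|^p]$, which is what closes the Gr\"onwall argument.

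Having isolated the feedback, the task reduces to showing each forcing contribution is $O(h^{p/2})$ in the relevant $L^p$ sense. The piece $\hda_\us - \da_s = \da(\theta,\hS_\us) - \da(\theta,S_s)$ is Lipschitz in $S$ because $\da'$ is bounded, so it is controlled by $|\hS_\us - S_\us| + |S_\us - S_s|$; the first term is $O(h^{1/2})$ by the strong convergence bound for $S$ quoted in the introduction, and the second by the increment bound for $S$ also quoted there. The piece $\ha_\us'(\dS_\us - \dS_s)$ is a bounded coefficient times a sensitivity increment, so Lemma~\ref{lemma:short_time} gives an $L^p$ bound of order $(s-\us)^{1/2}\leq h^{1/2}$ directly.

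The step I expect to be the main obstacle is the third forcing piece $(\ha_\us' - a_s')\,\dS_s$ and its diffusion counterpart $(\hb_\us' - b_s')\,\dS_s$, which is precisely where the non-Lipschitz difficulty flagged in the introduction surfaces: it cannot be folded into the Gr\"onwall feedback because $\dS_s$ is an unbounded random factor rather than a component of the error. The resolution is to keep it as pure forcing and decouple the two factors with H\"older's inequality. Since $a''$ is bounded, $a'$ is Lipschitz in $S$, giving $|\ha_\us' - a_s'|\leq L_a|\hS_\us - S_s|$, so that
\[
  \EE\big[ |\ha_\us' - a_s'|^p\, |\dS_s|^p \big]
   \leq L_a^p\, \big(\EE[|\hS_\us - S_s|^{2p}]\big)^{1/2}\big(\EE[|\dS_s|^{2p}]\big)^{1/2}.
\]
Here the first factor is $O(h^{p/2})$ because $\EE[|\hS_\us - S_s|^{2p}]=O(h^{p})$, again from the strong convergence and increment bounds for $S$, while the second factor is bounded by Theorem~\ref{thm:moments} applied to $\dS$.

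Collecting all the forcing contributions produces a constant $c_1$ with
\[
  \dM^{(p)}_t \leq c_1\, h^{p/2} + c_2 \int_0^t \dM^{(p)}_u\, \D u,
\]
and Gr\"onwall's inequality then yields the claimed $O(h^{p/2})$ bound. The higher-derivative and vector extensions follow by the same scheme, treating each further differentiated variable as another linear equation whose coefficients are bounded and whose forcing terms have bounded moments.
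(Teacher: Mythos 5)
Your proof is correct and follows essentially the same route as the paper: the same splitting of $E_t$ into a bounded-coefficient feedback term plus forcing terms, the same use of Jensen/BDG bounds and of H\"older's inequality to decouple the problematic product $(\ha_\us'-a_s')\,\dS_s$, and the same Gr\"onwall closure in $\EE[\sup_{0<s<t}|E_s|^p]$. The only (harmless) difference is that your grouping attaches the unbounded factor $\dS_s$ rather than $\hdS_\us$ to the coefficient increments, so you never need the moment bound for the discretised sensitivity (Lemma~\ref{lemma:moments}) that the paper's decomposition invokes.
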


\begin{proof}
Defining $E_t = \hdS_t - \dS_t$, the difference between the two is
\begin{eqnarray*}
  E_t  &=& \int_0^t (\hda_\us{-}\da_s) + (\ha_\us' \hdS_\us {-} a_s'\dS_s)\, \D s
    + \int_0^t (\hdb_\us{-}\db_s) + (\hb_\us' \hdS_\us{-} b_s'\dS_s)\, \D W_s \\
         &=& \int_0^t (\hda_\us{-}\da_\us) + (\ha_\us' \hdS_\us {-} a_\us'\dS_\us)\,
             + (\da_\us{-}\da_s) + (a_\us' \dS_\us{-} a_s'\dS_s)\, \D s \\
         &+& \int_0^t (\hdb_\us{-}\db_\us) + (\hb_\us' \hdS_\us{-} b_\us'\dS_\us)
             + (\db_\us{-}\db_s) + (b_\us' \dS_\us{-} b_s'\dS_s)\, \D W_s \\
         &=& \int_0^t (\hda_\us{-}\da_\us) + (\ha_\us'{-} a_\us') \hdS_\us
    + (\da_\us{-}\da_s) + (a_\us' {-} a_s')\dS_\us + a_s' (\dS_\us {-} \dS_s)\, \D s \\
         &+& \int_0^t (\hdb_\us{-}\db_\us) + (\hb_\us'{-} b_\us') \hdS_\us
    + (\db_\us{-}\db_s) + (b_\us' {-} b_s')\dS_\us + b_s' (\dS_\us {-} \dS_s)\, \D W_s \\
         &+& \int_0^t a_\us' E_\us \, \D s  + \int_0^t b_\us' E_\us \, \D W_s.
\end{eqnarray*}

This gives us 12 terms to bound, 5 from the first integral, 5 from the
second integral, and 2 from the last two integrals in the above expression.

For the first pair, given that all second derivatives of $a$ are bounded by
$L_a$, we have
\[
\EE\left[\sup_{0<s<t} \left| \int_0^s (\hda_\uu{-}\da_\uu)\, \D  u \right|^p  \right]
\leq T^{p-1}  \int_0^T \EE[\, |\hda_\uu{-}\da_\uu|^p] \, \D u
\leq L_a^p\, T^{p-1}  \int_0^T \EE[\, |\hS_\uu{-}S_\uu|^p] \, \D u,
\]
and similarly, using the BDG inequality, 
\[
\EE\left[\sup_{0<s<t} \left| \int_0^s (\hdb_\uu{-}\db_\uu)\, \D  W_u \right|^p  \right]
\leq C_p\, \EE\left[ \left( \int_0^t |\hdb_\uu{-}\db_\uu|^2 \, \D u \right)^{p/2}\right]
\leq C_p\, L_b^p\, T^{p/2-1} \! \int_0^T \EE[\, |\hS_\uu{-}S_\uu|^p] \, \D u.
\]
For the second pair we need to also use H\"older's inequality to give
\[
  \EE\left[\sup_{0<s<t} \left| \int_0^s (\ha_\uu'{-} a_\uu') \hdS_\uu\, \D u \right|^p  \right]
  \leq L_a^p\, T^{p-1}  \int_0^T \EE[\, |\hS_\uu{-}S_\uu|^{2p}]^{1/2}\ \EE[\, |\hdS_\uu|^{2p}]^{1/2}  \, \D u,
\]
and
\[
  \EE\left[\sup_{0<s<t} \left| \int_0^s (\hb_\uu'{-} b_\uu') \hdS_\uu\, \D W_u \right|^p  \right]
  \leq C_p\, L_b^p\, T^{p/2-1}  \int_0^T \EE[\, |\hS_\uu{-}S_\uu|^{2p}]^{1/2}\ \EE[\, |\hdS_\uu|^{2p}]^{1/2}  \, \D u.
\]
Similarly, for the third pair we have
\[
  \EE\left[\sup_{0<s<t} \left| \int_0^s (\da_\uu{-} \da_u) \, \D u \right|^p  \right]
  \leq L_a^p\, T^{p-1}  \int_0^T \EE[\, |S_\uu{-}S_u|^p] \, \D u,
\]
and
\[
  \EE\left[\sup_{0<s<t} \left| \int_0^s (\db_\uu{-} \db_u) \, \D W_u \right|^p  \right]
  \leq C_p\, L_b^p\, T^{p/2-1}  \int_0^T \EE[\, |S_\uu{-}S_u|^p] \, \D u,
\]
for the fourth pair we have
\[
  \EE\left[\sup_{0<s<t} \left| \int_0^s (a'_\uu{-} a'_u)\,\dS_\uu \, \D u \right|^p  \right]
  \leq L_a^p\, T^{p-1}  \int_0^T \EE[\, |S_\uu{-}S_s|^{2p}]^{1/2}
                            \, \EE[\, |\dS_\uu|^{2p}]^{1/2} \, \D u,
\]
and
\[
  \EE\left[\sup_{0<s<t} \left| \int_0^s (b'_\uu{-} b'_u)\,\dS_\uu \, \D W_u \right|^p  \right]
  \leq C_p\, L_b^p\, T^{p/2-1}  \int_0^T  \EE[\, |S_\uu{-}S_u|^{2p}]^{1/2}
                            \, \EE[\, |\dS_\uu|^{2p}]^{1/2} \, \D u,
\]
and for the fifth pair we have
\[
  \EE\left[\sup_{0<s<t} \left| \int_0^s a'_u (\dS_\uu{-}\dS_u)  \, \D u \right|^p  \right]
  \leq L_a^p\, T^{p-1}  \int_0^T \EE[\, |\dS_\uu{-}\dS_u|^p] \, \D u,
\]
and
\[
  \EE\left[\sup_{0<s<t} \left| \int_0^s  b'_u (\dS_\uu{-}\dS_u) \, \D W_u \right|^p  \right]
  \leq C_p\, L_b^p\, T^{p/2-1}  \int_0^T  \EE[\, |\dS_\uu{-}\dS_u|^p] \, \D u.
\]

For the final pair we have
\[
  \EE\left[\sup_{0<s<t} \left| \int_0^s a'_\uu E_\uu \, \D u \right|^p  \right]
  \leq L_a^p\, T^{p-1}  \int_0^t \EE\left[ \sup_{0<u<s} |E_u|^p \right] \, \D s,
\]
and
\[
  \EE\left[\sup_{0<s<t} \left| \int_0^s  b'_\uu E_\uu \, \D W_u \right|^p  \right]
  \leq C_p\, L_b^p\, T^{p/2-1}  \int_0^t \EE\left[ \sup_{0<u<s} |E_u|^p \right] \, \D s.
\]

Since $\EE[\, |\hS_\us{-}S_\us|^p]$ and $\EE[\, |S_\us{-}S_s|^p]$
are both $O(h^{p/2})$ due to standard results, and
$\EE[\, |\dS_\us{-}\dS_s|^p]$ is $O(h^{p/2})$ due to
Lemma \ref{lemma:short_time}, and
$\EE[\, |\dS_\us|^p]$ and $\EE[\, |\hdS_\us|^p]$ are both finite
due to Theorem \ref{thm:moments} and Lemma \ref{lemma:moments},
it follows that there are
constants $c_1, c_2$ such that for $0\leq t\leq T$, 
\[
Z_t \equiv \EE\left[ \sup_{0<s<t} |E_s|^p \right]
\]
satisfies the inequality
\[
  Z_t \leq c_1\, h^{p/2} + c_2 \int_0^t Z_s \ \D s,
\]
from which it follows that $Z_t = O(h^{p/2})$ due to Gr\"onwall's inequality.
\end{proof}

\section{Extensions}

\subsection{Vector SDEs and vector parameters}

The analysis extends naturally to cases in which $S_t$ and
$\theta$ are both vectors. Thus, in the most general case
we are interested in computing matrices and tensors such as
\[
\frac{\partial (S_t)_i}{\partial \theta_j}, ~~~
\frac{\partial^2 (S_t)_i}{\partial \theta_j\partial \theta_k}
\]
where the subscripts $i, j, k$ refer to the components of $S_t$
and $\theta$.  The analysis does not change substantially, the
notation simply becomes much more cumbersome.

\subsection{Higher order sensitivities}

Higher order path sensitivities are of interest to the author in
connection with work extending the original MLMC research of
Heinrich on parametric integration \cite{heinrich01}. In addition,
second order sensitivities are potentially of interest in finance
applications when computing second order Greeks using a conditional
expectation technique for the final timestep to smooth the payoff
\cite{glasserman04}.

Differentiating the original scalar SDE a second time gives the
second order path sensitivity SDE
\[
  \D \ddS_t = (\dda_t + 2\da'_t\dS_t + a''_t (\dS_t)^2 + a'_t \ddS_t)\, \D t
            + (\ddb_t + 2\db'_t\dS_t + b''_t (\dS_t)^2 + b'_t \ddS_t)\, \D W_t.
\]
Continuing this, if $a(\theta,S)$ and $b(\theta,S)$ are both
$k$-times differentiable then by induction it can be proved that
the $k$-th order sensitivity equation has the form
\[
  \D S_t^{(k)} = ( a_t^{(k)} + a'_t\, S_t^{(k)})\, \D t
              + ( b_t^{(k)} + b'_t\, S_t^{(k)})\, \D W_t,
\]
where $S_t^{(k)}\equiv \partial^k S_t/\partial \theta^k$
and $a_t^{(k)}$ is a sum of terms of the form
\[
  \frac{\partial^{i+j} a}{\partial \theta^i \partial S^j} \ 
  \prod_{l=1}^{k-1} (S_t^{(l)})^{q_l}
\]
with positive integers $i, j, q_l$ satisfying $2\leq i{+}j\leq k$
and $\sum_{l=1}^{k-1}q_l = j$, and $b_t^{(k)}$ is a similar summation.

The Euler-Maruyama discretisation of this SDE is again equivalent
to the $k$-th order derivative of the Euler-Maruyama discretisation
of the original SDE.
The numerical analysis proceeds inductively, proving that if
all of the derivatives of $a$ and $b$ up to the $k$-th order
are uniformly bounded, and there are constants
$c_p^{(1,j)}, c_p^{(2,j)}, c_p^{(3,j)}$ for all $j<k$ such that
\begin{eqnarray*}
  \EE\left[\sup_{0<t<T} |S_t^{(j)}|^p \right] &\leq & c_p^{(1,j)},   \\[0.1in]
  \EE\left[\sup_{t_0<s<t} |S_s^{(j)} {-} S_{t_0}^{(j)}|^p \right] &\leq & c_p^{(2,j)}\, (t{-}t_0)^{p/2},  \\[0.1in]
  \EE\left[ \sup_{0<t<T} |\hS_t^{(j)} {-} S_t^{(j)}|^p \right] &\leq & c_p^{(3,j)}\, h^{p/2},
\end{eqnarray*}
then there are constants $c_p^{(1,k)}, c_p^{(2,k)}, c_p^{(3,k)}$ such that
similar bounds hold for $S_t^{(k)}$ and $\hS_t^{(k)}$.
The critical step in the analysis is the bounding of terms such as
$\EE[\, |a_t^{(k)} {-}  a_\ut^{(k)}|^p]$ and
$\EE[\, |a_\ut^{(k)} {-}  \ha_\ut^{(k)}|^p]$ which requires the
following simple lemma.

\begin{lemma}
  If $u_i, v_i$ $i=1, 2, \ldots k$ are scalar random variables,
  and for any $p\geq 2$ there are finite constants $C_p$, $D_p$
  such that
\[
\EE[\, |u_i|^p ] \leq C_p, ~~~
\EE[\, |v_i|^p ] \leq C_p, ~~~
\EE[\, |u_i{-}v_i|^p ] \leq D_p
\]
for all $i$, then
\[
  \EE\left[\ \left|\prod_{i=1}^k u_i - \prod_{i=1}^k v_i\right|^p\, \right] \leq
  k^p\, C_{pk}^{1-1/k}\, D_{pk}^{1/k}
\]
\end{lemma}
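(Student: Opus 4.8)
The plan is to reduce the difference of products to a telescoping sum and then apply H\"older's inequality with a single, carefully chosen set of exponents. First I would write the algebraic identity
\[
\prod_{i=1}^k u_i - \prod_{i=1}^k v_i
= \sum_{j=1}^k \left(\prod_{i=1}^{j-1} u_i\right)(u_j - v_j)\left(\prod_{i=j+1}^k v_i\right),
\]
which telescopes: in the summand indexed by $j$ the factors to the left of position $j$ are $u$'s and those to the right are $v$'s, so consecutive terms cancel in pairs and the sum collapses to the left-hand side. Taking absolute values and applying the triangle inequality then bounds $|\prod u_i - \prod v_i|$ by the sum of the $k$ absolute summands.

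Next I would take $p$-th powers and expectations. The power-mean (discrete Jensen) inequality $(\sum_{j=1}^k a_j)^p \leq k^{p-1}\sum_{j=1}^k a_j^p$, applied to the $k$ nonnegative summands, gives
\[
\EE\left[\left|\prod_{i=1}^k u_i - \prod_{i=1}^k v_i\right|^p\right]
\leq k^{p-1} \sum_{j=1}^k \EE\left[\,\prod_{i<j}|u_i|^p\,|u_j-v_j|^p\prod_{i>j}|v_i|^p\right],
\]
so it remains only to bound the $p$-th moment of each individual summand.

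The key step is to estimate each of these expectations by the generalised H\"older inequality using $k$ exponents all equal to $k$, which satisfy $\sum 1/k = 1$. Each summand is a product of exactly $k$ factors, namely $(k{-}1)$ factors of the form $|u_i|^p$ or $|v_i|^p$ together with the single factor $|u_j{-}v_j|^p$, so H\"older bounds its expectation by the product of the $k$-th roots of the $k$-th powers of each factor. This turns each $|u_i|^p$ and $|v_i|^p$ into $\EE[|\cdot|^{pk}]^{1/k}\leq C_{pk}^{1/k}$ and turns $|u_j{-}v_j|^p$ into $\EE[|u_j{-}v_j|^{pk}]^{1/k}\leq D_{pk}^{1/k}$. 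Since there are always $(k{-}1)$ factors of the first two types and one of the last, every summand is bounded by $C_{pk}^{1-1/k}D_{pk}^{1/k}$, independently of $j$.

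Finally, because this per-term bound does not depend on $j$, the sum over $j=1,\dots,k$ simply contributes a factor of $k$, and together with the $k^{p-1}$ from the power-mean step this yields the stated constant $k^p\,C_{pk}^{1-1/k}D_{pk}^{1/k}$. I do not anticipate any real obstacle; the only point requiring care is the choice in the H\"older step to raise every one of the $k$ factors to the power $k$, which is precisely what produces moments of order $pk$, and hence the constants $C_{pk}$ and $D_{pk}$ rather than $C_p$ and $D_p$.
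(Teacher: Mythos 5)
Your proof is correct and follows essentially the same route as the paper: the telescoping decomposition of the difference of products, the power-mean/Jensen step giving $k^{p-1}$, and the generalised H\"older inequality with all $k$ exponents equal to $k$. The only (immaterial) difference is that you place the $u_i$ factors to the left of $(u_j-v_j)$ and the $v_i$ factors to the right, while the paper does the reverse; since the hypotheses are symmetric in $u_i$ and $v_i$, both choices work identically.
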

\begin{proof}
When $k=2$, $u_1 u_2 - v_1 v_2 = (u_1{-}v_1) u_2 + v_1 (u_2{-}v_2)$.
This generalises to 
\[
  \prod_{i=1}^k u_i - \prod_{i=1}^k v_i
  = \sum_{j=1}^k \left\{ \left(\prod_{i=1}^{j-1}
      v_i\right) (u_j-v_j) \left(\prod_{i=j+1}^{k}  u_i\right) \right\}
\]
By Jensen's inequality we have
\[
  \left|\prod_{i=1}^k u_i - \prod_{i=1}^k v_i\right|^p
  \leq k^{p-1} \sum_{j=1}^k  \left\{ \left( \prod_{i=1}^{j-1}  |v_i|^p \right)
  |u_j-v_j|^p  \left( \prod_{i=j+1}^{k} |u_i|^p \right) \right\}.
\]
For each $j$, H\"older's inequality gives
\begin{eqnarray*}
\lefteqn{ \EE\left[ \left( \prod_{i=1}^{j-1}  |v_i|^p \right)
  |u_j-v_j|^p  \left( \prod_{i=j+1}^{k} |u_i|^p \right) \right]}
  \\ &\leq&
\left( \prod_{i=1}^{j-1} \EE\left [ |v_i|^{pk}\right] \right)^{1/k}
\EE\left [|u_j-v_j|^{pk}\right]^{1/k}
\left( \prod_{i=j+1}^{k} \EE\left [|u_i|^{pk}\right] \right)^{1/k},
\end{eqnarray*}
and hence we obtain the desired result.
\end{proof}

%

\section{Conclusions}

This note has filled a gap in the stochastic numerical analysis
literature by proving the strong convergence of path sensitivity
approximations which do not satisfy the usual conditions assumed
for the analysis of Euler-Maruyama approximations.
The same order of strong convergence applies for higher order
sensitivities, provided the required drift and diffusion derivatives
exist and are bounded.

It is conjectured that the analysis in this note can be extended
to other discretisations such as the first order Milstein scheme,
but this is a topic for future analysis.

\bibliographystyle{plain}
\bibliography{../../bib/mc,../../bib/mlmc}

\end{document}